\documentclass[12pt,reqno]{amsart}
\usepackage{hyphenat}
\usepackage{hyphenat}
\usepackage[a4paper,margin=2.5cm,top=2.5cm,bottom=2.5cm,centering,headheight=3ex,headsep=4ex,vcentering]{geometry}
\usepackage{amssymb,amsfonts,amsmath,amsthm}
\usepackage{orcidlink}
\usepackage[cal=cm,scr=euler]{mathalfa}
\usepackage{microtype}
\usepackage{mlmodern}
\newtheorem{theorem}{Theorem}
\newtheorem{lemma}{Lemma}
\newtheorem{corollary}{Corollary}
\theoremstyle{definition}

\theoremstyle{remark}

\title{Some Comments on Regular Overpartitions modulo $2^k$}

\author[A. M. Alanazi]{Abdulaziz M. Alanazi\,\orcidlink{0000-0002-7909-2539}}
\address[A. M. Alanazi]{Department of Mathematics, Faculty of Sciences, University of Tabuk, P.O.Box 741, Tabuk 71491, Saudi Arabia}
\email{am.alenezi@ut.edu.sa}

\author[M. P. Saikia]{Manjil P. Saikia\,\orcidlink{0000-0002-2997-6731}}
\address[M. P. Saikia]{Mathematical and Physical Sciences division, School of Arts \& Sciences, Ahmedabad University, Navrangpura, Ahmedabad 380009, Gujarat, India}
\email{manjil.saikia@ahduni.edu.in}

\keywords{Overpartitions; $\ell$-regular overpartitions; biregular overpartitions; partition congruences; congruences modulo powers of $2$.}

\subjclass[2020]{11P83, 11P81, 05A17, 11A25.}

\begin{document}

\begin{abstract}
For coprime integers $\ell,\mu\ge 2$, Alanazi, Munagi, and Saikia (2026) studied
$\overline{R}_{\ell,\mu}(n)$, the number of overpartitions of $n$ in which
no part is divisible by $\ell$ or by $\mu$, together with the
single-modulus analogue $\overline{R}_{\ell}(n)$. We record a simple
combinatorial mechanism that determines both
functions modulo every power of $2$ in terms of the number of distinct
part sizes of the underlying ordinary partition. We also deduce a clean characterization of
$\overline{R}_{\ell}(n)$ and $\overline{R}_{\ell,\mu}(n)$ modulo $4$ in
terms of perfect squares.
\end{abstract}

\maketitle

\section{Introduction}

A partition of a positive integer $n$ is a sequence of non-increasing positive integers $\lambda_1 \geq \lambda_2\geq \cdots \geq \lambda_k$ such that $\lambda_1+\cdots +\lambda_k=n$ (and the $\lambda_i$'s are called the parts of the partition. A famous generalization, namely overpartitions were introduced by Corteel and Lovejoy~\cite{CL}: an
\emph{overpartition} of $n$ is a partition of $n$ in which the first occurrence of each distinct part size may or
may not be overlined. Equivalently, an ordinary partition with $r$
distinct part sizes is the underlying partition of exactly $2^{r}$
overpartitions. Following Alanazi and Munagi~\cite{AM} and Alanazi, Munagi, and
Sellers~\cite{AMSe}, an overpartition is \emph{$\ell$-regular} if none of
its part sizes is divisible by $\ell$; we write $\overline{R}_{\ell}(n)$
for the number of $\ell$-regular overpartitions of $n$. More recently,
Alanazi, Munagi, and Saikia~\cite{AMS} studied the biregular refinement:
for coprime $\ell,\mu\ge 2$, let $\overline{R}_{\ell,\mu}(n)$ be the
number of overpartitions of $n$ in which no part size is divisible by
$\ell$ or by $\mu$. They proved a seven-way combinatorial identity and a
number of congruences via generating functions and Radu's algorithm, and
several of those congruences were subsequently extended by elementary
means in~\cite{Paudel} and~\cite{Ghoshal}.

An important contribution in this direction was made by Kim~\cite{Kim}, who observed that the ordinary overpartition function admits the decomposition
\[
\overline{p}(n)=\sum_{r\ge0}2^rp_r(n),
\]
where $\overline{p}(n)$ is the number of overpartitions of $n$ and $p_r(n)$ denotes the number of ordinary partitions of $n$ having exactly $r$ distinct part sizes. Reducing this identity modulo $8$, Kim obtained
\[
\overline{p}(n)\equiv 2p_1(n)+4p_2(n)\pmod8,
\]
which, together with explicit evaluations of $p_1(n)$ and $p_2(n)$, yields a complete characterization of the overpartition function modulo $8$.

The main theorem (Theorem \ref{thm:master} below) of the present paper may be viewed as a substantial generalization of this structural decomposition. Instead of unrestricted overpartitions, we consider $\ell$-regular and $(\ell,\mu)$-regular overpartitions. Moreover, rather than restricting attention to modulo $8$, we establish congruences modulo every power of $2$. In particular, we show that for any partition class defined solely by restrictions on part sizes, the corresponding overpartition function modulo $2^k$ depends only on ordinary partitions having fewer than $k$ distinct part sizes. Thus Kim's decomposition appears as the unrestricted special case of our general theorem.
The generating functions of
the two families that we are interested in are
\begin{equation}\label{eq:gf-l}
\sum_{n\ge 0}\overline{R}_{\ell}(n)\,q^{n}
=\prod_{\ell\nmid k}\frac{1+q^{k}}{1-q^{k}}
,
\end{equation}
and
\begin{equation}\label{eq:gf-lmu}
\sum_{n\ge 0}\overline{R}_{\ell,\mu}(n)\,q^{n}
=\prod_{\substack{\ell\nmid k\\ \mu\nmid k}}\frac{1+q^{k}}{1-q^{k}}.
\end{equation}

The paper is organized as follows: in Section \ref{sec:prelim} we give some preliminary results used to prove our main result in Section \ref{sec:main}. We end the paper with some concluding remarks in Section \ref{sec:conc}.

\section{Preliminaries}\label{sec:prelim}

Throughout, $\tau$ is the divisor-counting function, with the
convention $\tau(x)=0$ whenever $x\notin\mathbb{Z}_{\ge1}$, so that
$\tau(n/\ell)=0$ when $\ell\nmid n$. For coprime $\ell,\mu$ we set
\[
\Delta_{\ell,\mu}(n)=\tau(n)-\tau\!\left(\tfrac{n}{\ell}\right)
-\tau\!\left(\tfrac{n}{\mu}\right)+\tau\!\left(\tfrac{n}{\ell\mu}\right).
\]
For integers $a>b\ge1$ we let $S_{a,b}(n)$ be the number of partitions of
$n$ whose set of distinct part sizes is exactly $\{a,b\}$, and for
$a>b>c\ge1$ we let $T_{a,b,c}(n)$ be the number of partitions of $n$
whose set of distinct part sizes is exactly $\{a,b,c\}$; equivalently
\begin{gather*}
S_{a,b}(n)=\#\{(u,v)\in\mathbb{Z}_{\ge1}^{2}:au+bv=n\},\\
T_{a,b,c}(n)=\#\{(u,v,w)\in\mathbb{Z}_{\ge1}^{3}:au+bv+cw=n\}.
\end{gather*}
For $a>b\ (>c)$, we set
\[
\epsilon_{\ell}(a,b)=
\begin{cases}1,&\ell\nmid a,\ \ell\nmid b,\\0,&\text{else,}\end{cases}
\qquad
\epsilon_{\ell,\mu}(a,b)=
\begin{cases}1,&\ell,\mu\nmid a\text{ and }\ell,\mu\nmid b,\\0,&\text{else,}\end{cases}
\]
and $\epsilon_{\ell}(a,b,c)$, $\epsilon_{\ell,\mu}(a,b,c)$ defined
analogously on three part sizes.

For a class $\mathcal{R}$ of ordinary partitions defined by conditions on
part sizes alone (such as $\ell$-regularity or $(\ell,\mu)$-regularity),
let $N_{r}^{\mathcal{R}}(n)$ denote the number of partitions in
$\mathcal{R}$ of $n$ with exactly $r$ distinct part sizes. We have the following lemmas.

\begin{lemma}\label{lem:lift}
Let $\mathcal{R}$ be a class of ordinary partitions defined by conditions
on part sizes alone, and let $\overline{R}(n)$ count the overpartitions
of $n$ whose underlying ordinary partition lies in $\mathcal{R}$. Then
\[
\overline{R}(n)=\sum_{r\ge1}2^{r}N_{r}^{\mathcal{R}}(n),
\qquad\text{and for every }k\ge1,\quad
\overline{R}(n)\equiv\sum_{r=1}^{k-1}2^{r}N_{r}^{\mathcal{R}}(n)
\pmod{2^{k}}.
\]
\end{lemma}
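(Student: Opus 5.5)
The plan is to prove the identity by a direct fibre-counting argument over the map sending an overpartition to its underlying ordinary partition, and then to reduce it modulo $2^{k}$.

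\emph{Step 1 (fibres).} Fix $n\ge1$ and let $\pi:\overline{\mathcal{P}}(n)\to\mathcal{P}(n)$ be the forgetful map that erases all overlines. Let $\lambda$ be an ordinary partition of $n$ with exactly $r$ distinct part sizes $s_1>\cdots>s_r$. The fibre $\pi^{-1}(\lambda)$ is in bijection with the subsets of $\{s_1,\dots,s_r\}$. Indeed, an overpartition over $\lambda$ is determined by choosing, independently for each distinct size $s_i$, whether its first occurrence is overlined. Hence $|\pi^{-1}(\lambda)|=2^{r}$, as already noted in the introduction.

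\emph{Step 2 (using the hypothesis on $\mathcal{R}$).} Since $\mathcal{R}$ is defined by conditions on part sizes alone, and overlining changes no part size, the condition ``the underlying partition lies in $\mathcal{R}$'' is exactly the statement that $\pi(\overline{\lambda})\in\mathcal{R}$. Thus the overpartitions counted by $\overline{R}(n)$ form the disjoint union of the fibres $\pi^{-1}(\lambda)$ over $\lambda\in\mathcal{R}$ with $|\lambda|=n$. Grouping these $\lambda$ by their number $r$ of distinct part sizes gives
\[
\overline{R}(n)=\sum_{r\ge1}2^{r}N_r^{\mathcal{R}}(n).
\]
For $n\ge1$ every partition has $r\ge1$. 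For $n=0$ the empty partition contributes $2^0$, so the stated identity should be read for $n\ge1$; I would note this convention explicitly. The sum is finite, because $N_r^{\mathcal{R}}(n)=0$ as soon as $1+2+\cdots+r>n$.

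\emph{Step 3 (reduction modulo $2^{k}$).} For every $r\ge k$ we have $2^{k}\mid 2^{r}N_r^{\mathcal{R}}(n)$. Discarding these terms gives the congruence with the sum truncated at $r=k-1$. When $k=1$ this is the empty sum, which correctly says that $\overline{R}(n)$ is even for $n\ge1$.

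There is no real obstacle here. The only point needing care is Step 2: one must check that membership in $\mathcal{R}$ depends only on the underlying partition, and this is exactly what the phrase ``conditions on part sizes alone'' guarantees. The other point is the $n=0$ convention noted in Step 2.
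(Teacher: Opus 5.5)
Your proof is correct and matches the paper's argument: each partition in $\mathcal{R}$ with $r$ distinct part sizes lifts to exactly $2^{r}$ overpartitions through an independent overline choice for each size, summing over $\mathcal{R}$ gives the identity, and the terms with $r\ge k$ vanish modulo $2^{k}$. Your remark that the identity should be read for $n\ge1$ is a valid clarification the paper leaves implicit, since at $n=0$ the empty partition contributes $2^{0}$.
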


\begin{proof}
Overlining the first occurrence of a part size does not change the
multiset of part sizes, so an overpartition belongs to the class exactly
when its underlying ordinary partition does, and each partition with $r$
distinct part sizes lifts to exactly $2^{r}$ such overpartitions by an
independent binary choice per distinct part size. Summing over
$\mathcal{R}$ gives the first identity. The second identity is trivial.
\end{proof}
Remark.
In the unrestricted case, Lemma~\ref{lem:lift} reduces to the decomposition used by Kim~\cite{Kim} in the study of the ordinary overpartition function modulo $8$. The present lemma extends that principle to arbitrary partition classes defined by conditions on part sizes and to congruences modulo every power of $2$.
The second lemma is not difficult to prove, so we omit the details.

\begin{lemma}\label{lem:div}
Let $n\ge1$. The number of divisors $d\mid n$ with $\ell\nmid d$ is
$\tau(n)-\tau(n/\ell)$. If $\gcd(\ell,\mu)=1$, the number of divisors
$d\mid n$ with $\ell\nmid d$ and $\mu\nmid d$ is $\Delta_{\ell,\mu}(n)$.
\end{lemma}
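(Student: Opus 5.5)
The first claim is a complement count. Every divisor $d$ of $n$ with $\ell\mid d$ can be written as $d=\ell e$, and then $e$ is a divisor of $n/\ell$. Conversely, every divisor $e$ of $n/\ell$ gives the divisor $\ell e$ of $n$, which is divisible by $\ell$. So the map $e\mapsto \ell e$ is a bijection from the divisors of $n/\ell$ onto the divisors of $n$ that are divisible by $\ell$. If $\ell\nmid n$, no divisor of $n$ is divisible by $\ell$, and this matches the convention $\tau(n/\ell)=0$. Subtracting this count from the total number $\tau(n)$ of divisors of $n$ gives the first formula.

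For the second claim I use inclusion--exclusion on the set of divisors of $n$. Let $A_\ell$ be the set of divisors divisible by $\ell$, and $A_\mu$ the set of divisors divisible by $\mu$. The number we want is
\[
\tau(n)-|A_\ell|-|A_\mu|+|A_\ell\cap A_\mu|.
\]
The same bijection as above gives $|A_\ell|=\tau(n/\ell)$ and $|A_\mu|=\tau(n/\mu)$.

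The only point that needs care is the intersection term. A divisor $d$ lies in $A_\ell\cap A_\mu$ exactly when $\ell\mid d$ and $\mu\mid d$. Since $\gcd(\ell,\mu)=1$, this is equivalent to $\ell\mu\mid d$. Applying the bijection $e\mapsto \ell\mu e$ from the divisors of $n/(\ell\mu)$ then gives $|A_\ell\cap A_\mu|=\tau(n/(\ell\mu))$.

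If $\ell\mu\nmid n$, the intersection is empty, and the convention makes $\tau(n/(\ell\mu))=0$, so the formula still holds. Substituting the three counts into the inclusion--exclusion expression gives exactly $\Delta_{\ell,\mu}(n)$, as defined in the preliminaries.
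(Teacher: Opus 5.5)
Your proof is correct and complete. The paper omits this proof as ``not difficult,'' and your argument supplies exactly the routine details it leaves out: the bijection $e\mapsto \ell e$ (and $e\mapsto \ell\mu e$, using $\gcd(\ell,\mu)=1$ to identify ``$\ell\mid d$ and $\mu\mid d$'' with $\ell\mu\mid d$), inclusion--exclusion, and the check that the convention $\tau(x)=0$ covers the non-divisible cases.
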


We note that the functions $N_{r}$ are generating-function
coefficients. Marking each used part size by a variable $x$,
\begin{equation}\label{eq:bivariate}
\sum_{n,r\ge0}N_{r}^{(\ell)}(n)\,x^{r}q^{n}
=\prod_{\ell\nmid k}\left(1+x\,\frac{q^{k}}{1-q^{k}}\right),
\end{equation}
and setting $x=2$ recovers~\eqref{eq:gf-l} (and likewise for the
biregular product). Thus Lemma~\ref{lem:lift} says that working
modulo $2^{k}$ amounts to truncating the polynomial
$\sum_{r}N_{r}(n)x^{r}$ below degree $k$ and evaluating at $x=2$. Here
$N_{r}^{(\ell)}$ is $N_{r}^{\mathcal{R}}$ for $\mathcal{R}$ the
$\ell$-regular partitions, and $N_{r}^{(\ell,\mu)}$ for the biregular
class.

\section{Congruences modulo $2^{k}$}\label{sec:main}

\begin{theorem}\label{thm:master}
For every $k\ge1$ and every $n\ge1$,
\[
\overline{R}_{\ell}(n)\equiv\sum_{r=1}^{k-1}2^{r}N_{r}^{(\ell)}(n)
\pmod{2^{k}},
\qquad
\overline{R}_{\ell,\mu}(n)\equiv\sum_{r=1}^{k-1}2^{r}N_{r}^{(\ell,\mu)}(n)
\pmod{2^{k}}.
\]
Moreover, we have explicitly
$N_{1}^{(\ell)}(n)=\tau(n)-\tau(n/\ell)$ and
$N_{1}^{(\ell,\mu)}(n)=\Delta_{\ell,\mu}(n)$, while
\[
N_{2}^{(\ell)}(n)=\sum_{a>b\ge1}\epsilon_{\ell}(a,b)S_{a,b}(n),\qquad
N_{2}^{(\ell,\mu)}(n)=\sum_{a>b\ge1}\epsilon_{\ell,\mu}(a,b)S_{a,b}(n),
\]
and similarly $N_{3}=\sum_{a>b>c}\epsilon(a,b,c)T_{a,b,c}(n)$ in each
class.
\end{theorem}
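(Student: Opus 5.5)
The congruences are immediate from Lemma~\ref{lem:lift}. The $\ell$-regular partitions and the $(\ell,\mu)$-regular partitions are both classes $\mathcal{R}$ defined by conditions on part sizes alone: a partition belongs to the class exactly when none of its part sizes is divisible by $\ell$ (respectively, by $\ell$ or $\mu$). So Lemma~\ref{lem:lift} applies to each class with $N_r^{\mathcal{R}}=N_r^{(\ell)}$ or $N_r^{(\ell,\mu)}$. Discarding the terms with $r\ge k$, which are multiples of $2^k$, gives the two displayed congruences for all $k\ge1$ and $n\ge1$. Equivalently, one can read them off from \eqref{eq:bivariate} at $x=2$.

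For the explicit formulas I would classify partitions by their set of distinct part sizes. If that set is $\{a\}$, then $a\mid n$ and the partition is forced to be $a^{n/a}$. Hence $N_1^{\mathcal{R}}(n)$ is the number of divisors $d\mid n$ whose size is allowed in $\mathcal{R}$. By Lemma~\ref{lem:div} this number is $\tau(n)-\tau(n/\ell)$ in the $\ell$-regular case, and $\Delta_{\ell,\mu}(n)$ in the biregular case (here $\gcd(\ell,\mu)=1$ is used).

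If the set of part sizes is $\{a,b\}$ with $a>b$, the partition is determined by the multiplicities $u,v\ge1$ with $au+bv=n$. There are $S_{a,b}(n)$ such partitions. The pair is admissible exactly when neither $a$ nor $b$ is excluded, i.e.\ when $\epsilon_\ell(a,b)=1$ (resp.\ $\epsilon_{\ell,\mu}(a,b)=1$). Summing over all pairs $a>b\ge1$ gives the formulas for $N_2$; the sum is finite since $S_{a,b}(n)=0$ once $a+b>n$. The three-part case is identical, with triples $(u,v,w)$ counted by $T_{a,b,c}(n)$ and indicator $\epsilon(a,b,c)$.

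The only step that needs any care is the inclusion–exclusion behind Lemma~\ref{lem:div}, which the paper treats as known. The divisors of $n$ divisible by $\ell$ correspond bijectively to the divisors of $n/\ell$. Those divisible by both $\ell$ and $\mu$ are exactly those divisible by $\ell\mu$, by coprimality, and these correspond to the divisors of $n/(\ell\mu)$. The convention $\tau(x)=0$ for non-integral $x$ handles the cases where $\ell$, $\mu$ or $\ell\mu$ does not divide $n$. I expect no real obstacle beyond this bookkeeping.
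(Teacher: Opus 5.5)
Your proposal is correct and matches the paper's proof. The congruences come from Lemma~\ref{lem:lift} applied to the $\ell$-regular and $(\ell,\mu)$-regular classes, and $N_1,N_2,N_3$ come from classifying partitions by their set of distinct part sizes, with Lemma~\ref{lem:div} supplying the divisor counts; your inclusion--exclusion argument for Lemma~\ref{lem:div}, which the paper omits, is also correct.
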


\begin{proof}
Both congruences are just Lemma~\ref{lem:lift} applied to the $\ell$-regular
and $(\ell,\mu)$-regular classes. 

For the explicit terms: a partition
with $r=1$ has the form $(d^{\,n/d})$ with $d\mid n$, and it is
$\ell$-regular (resp.\ $(\ell,\mu)$-regular) exactly when $\ell\nmid d$
(resp.\ $\ell\nmid d$ and $\mu\nmid d$), so $N_{1}$ is the corresponding
divisor count from Lemma~\ref{lem:div}. 

A partition with $r=2$ has a
distinct part-size set $\{a,b\}$ with $a>b$, occurring $S_{a,b}(n)$
times, and is regular exactly when the relevant indicator $\epsilon$
equals $1$; summing over all pairs gives $N_{2}$. The case $r=3$ is
identical with triples and $T_{a,b,c}$.
\end{proof}

The case $k=2$ keeps only
$N_{1}$, so we have
\[
\overline{R}_{\ell}(n)\equiv 2\bigl(\tau(n)-\tau(n/\ell)\bigr)\pmod 4,
\qquad
\overline{R}_{\ell,\mu}(n)\equiv 2\,\Delta_{\ell,\mu}(n)\pmod 4.
\]

The case $k=3$ is the natural analogue of Kim's modulo $8$ decomposition for unrestricted overpartitions. Here the two-distinct-part contribution is replaced by its $\ell$-regular (respectively $(\ell,\mu)$-regular) analogue.

The case $k=4$ keeps $N_{1},N_{2},N_{3}$ and produces the modulo $16$
characterizations.

The modulo $4$ case is interesting, because
$\tau(m)$ is odd if and only if $m$ is a perfect square.
\begin{corollary}\label{cor:square-l}
For all $n\ge1$,
\[
\overline{R}_{\ell}(n)\equiv
\begin{cases}
2\pmod 4,&\text{if exactly one of }n,\ n/\ell\text{ is a perfect square},\\
0\pmod 4,&\text{otherwise.}
\end{cases}
\]
In particular $\overline{R}_{\ell}(n)\equiv 0\pmod 4$ for a set of $n$ of
natural density $1$.
\end{corollary}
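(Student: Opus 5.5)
We start from the case $k=2$ of Theorem~\ref{thm:master}, which gives
\[
\overline{R}_{\ell}(n)\equiv 2\bigl(\tau(n)-\tau(n/\ell)\bigr)\pmod 4 .
\]
Since $2x \bmod 4$ depends only on the parity of $x$, we get $\overline{R}_{\ell}(n)\equiv 2\pmod 4$ exactly when $\tau(n)-\tau(n/\ell)$ is odd, and $\overline{R}_{\ell}(n)\equiv 0\pmod 4$ otherwise. The task is therefore to determine the parity of $\tau(n)+\tau(n/\ell)$.

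For this we use the classical fact that $\tau(m)$ is odd if and only if $m$ is a perfect square. The proof pairs each divisor $d$ of $m$ with $m/d$; the only divisor paired with itself is $d=\sqrt m$, which occurs exactly when $m$ is a square. We also have to handle the convention $\tau(n/\ell)=0$ when $\ell\nmid n$. In that case $\tau(n/\ell)$ is even, which agrees with reading ``$n/\ell$ is a perfect square'' as false, since $n/\ell$ is not even an integer. With this reading, $\tau(n)-\tau(n/\ell)$ is odd precisely when exactly one of $n$ and $n/\ell$ is a perfect square, which proves the displayed case split.

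For the density statement, note that if $\overline{R}_{\ell}(n)\not\equiv 0\pmod 4$, then either $n$ is a square or $n=\ell m^2$ for some $m$. The number of such $n\le X$ is at most $\sqrt X+\sqrt{X/\ell}=O(\sqrt X)$, which is $o(X)$. Hence the exceptional set has density $0$, and its complement has natural density $1$.

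None of these steps is a real obstacle. The only points needing care are the parity-of-$\tau$ lemma and the non-integer convention for $n/\ell$, and both are routine.
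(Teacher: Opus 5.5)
Your proposal is correct and follows essentially the same route as the paper: the $k=2$ case of Theorem~\ref{thm:master}, the fact that $\tau(m)$ is odd exactly when $m$ is a square, and a density-zero exceptional set. Your bound $\sqrt{X}+\sqrt{X/\ell}$ is slightly more careful than the paper's remark that perfect squares have density $0$, because the exceptional set also contains the numbers $n=\ell m^2$.
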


\begin{proof}
By the $k=2$ case, $\overline{R}_{\ell}(n)\equiv
2\bigl(\tau(n)-\tau(n/\ell)\bigr)\pmod 4$, and $2x\bmod 4$ depends only on
$x\bmod 2$. Since $\tau(m)\equiv[m\text{ is a square}]\pmod 2$\footnote{where we use the Iverson bracket, where [logical statement] is $1$ if the statement is true, else it is $0$.}, we get
$\tau(n)-\tau(n/\ell)\equiv[n\text{ square}]+[n/\ell\text{ square}]
\pmod 2$, which is $1$ exactly when one of the two is a square and the
other is not.

As perfect squares have density $0$, so does the set where
the residue is $2$.
\end{proof}

\begin{corollary}\label{cor:square-lmu}
For all $n\ge1$, $\overline{R}_{\ell,\mu}(n)\equiv 2\pmod 4$ if and only
if an odd number of $n,\ n/\ell,\ n/\mu,\ n/(\ell\mu)$ are perfect
squares, and $\overline{R}_{\ell,\mu}(n)\equiv0\pmod 4$ otherwise.
\end{corollary}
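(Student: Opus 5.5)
The plan mirrors the proof of Corollary~\ref{cor:square-l}, starting from the $k=2$ case of Theorem~\ref{thm:master}, which gives
\[
\overline{R}_{\ell,\mu}(n)\equiv 2\,\Delta_{\ell,\mu}(n)\pmod 4 .
\]
Since $2x \bmod 4$ depends only on $x \bmod 2$, it suffices to determine the parity of
\[
\Delta_{\ell,\mu}(n)=\tau(n)-\tau(n/\ell)-\tau(n/\mu)+\tau(n/(\ell\mu)).
\]

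Reducing modulo $2$, the minus signs disappear, so
\[
\Delta_{\ell,\mu}(n)\equiv \tau(n)+\tau(n/\ell)+\tau(n/\mu)+\tau(n/(\ell\mu))\pmod 2 .
\]
Next, apply the fact that $\tau(m)\equiv[m\text{ is a square}]\pmod 2$ to each term. For the terms with non-integer argument, the convention $\tau(x)=0$ for $x\notin\mathbb{Z}_{\ge1}$ matches the convention that a non-integer rational is not counted as a perfect square. With that reading, the case ``$n/\ell$ is not an integer'' fits the bracket automatically. We obtain
\[
\Delta_{\ell,\mu}(n)\equiv [n\ \square]+[n/\ell\ \square]+[n/\mu\ \square]+[n/(\ell\mu)\ \square]\pmod 2,
\]
and this is $1$ exactly when an odd number of the four quantities $n,\ n/\ell,\ n/\mu,\ n/(\ell\mu)$ are perfect squares. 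Multiplying by $2$ gives residue $2$ in that case and $0$ otherwise, which is the claimed dichotomy.

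The only point needing care is the interpretation of ``$n/\ell$ is a perfect square'' when $\ell\nmid n$. I would state explicitly that the phrase means ``is a perfect square integer,'' so that it agrees with the $\tau$ convention from the Preliminaries. Coprimality of $\ell$ and $\mu$ is already built into Theorem~\ref{thm:master} through Lemma~\ref{lem:div}, so nothing further is needed. There is no real obstacle; the argument is a direct parity reduction.
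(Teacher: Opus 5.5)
Your proof is correct and follows the paper's route exactly. Like the paper, you start from the $k=2$ case $\overline{R}_{\ell,\mu}(n)\equiv 2\,\Delta_{\ell,\mu}(n)\pmod 4$, drop the signs modulo $2$, and replace each $\tau$ by the corresponding square indicator. Your explicit remark that ``perfect square'' must mean a perfect square integer, to match the convention $\tau(x)=0$ for non-integers (so that, e.g., $n/\ell=1/4$ does not count), is a useful clarification that the paper leaves implicit.
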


\begin{proof}
Identical to Corollary~\ref{cor:square-l}, applied to
$\overline{R}_{\ell,\mu}(n)\equiv 2\,\Delta_{\ell,\mu}(n)\pmod 4$ with the
four square-indicators of $n,n/\ell,n/\mu,n/(\ell\mu)$.
\end{proof}

Note that Corollary \ref{cor:square-l} was obtained in \cite[Corollary 2]{AMSe}  and Corollary \ref{cor:square-lmu} was obtained in \cite[Theorem 3.4]{AMS} in different but equivalent forms.

We make the modulo $8$ case explicit.

\begin{theorem}\label{thm:mod8}
For all $n\ge1$,
\begin{align*}
\overline{R}_{\ell}(n)
&\equiv 2\bigl(\tau(n)-\tau(n/\ell)\bigr)
+4\sum_{a>b\ge1}\epsilon_{\ell}(a,b)\,S_{a,b}(n)\pmod 8,\\[2pt]
\overline{R}_{\ell,\mu}(n)
&\equiv 2\,\Delta_{\ell,\mu}(n)
+4\sum_{a>b\ge1}\epsilon_{\ell,\mu}(a,b)\,S_{a,b}(n)\pmod 8.
\end{align*}
\end{theorem}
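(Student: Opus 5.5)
This is the case $k=3$ of Theorem~\ref{thm:master}, combined with the explicit formulas for $N_1$ and $N_2$ recorded there. First I apply Lemma~\ref{lem:lift}, equivalently the congruence part of Theorem~\ref{thm:master}, with $k=3$. For $\mathcal{R}$ the $\ell$-regular class this gives
\[
\overline{R}_{\ell}(n)\equiv 2N_1^{(\ell)}(n)+4N_2^{(\ell)}(n)\pmod 8 .
\]
The reason is that every term $2^rN_r^{(\ell)}(n)$ with $r\ge3$ is divisible by $8$. The same truncation applies verbatim to the $(\ell,\mu)$-regular class.

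Next I substitute the explicit evaluations. By Lemma~\ref{lem:div}, $N_1^{(\ell)}(n)=\tau(n)-\tau(n/\ell)$ and $N_1^{(\ell,\mu)}(n)=\Delta_{\ell,\mu}(n)$, since a one-part-size partition $(d^{\,n/d})$ is regular exactly when its unique part size $d$ avoids the forbidden moduli. For $N_2$, I partition the set of two-part-size partitions of $n$ according to the ordered pair $a>b$ of distinct part sizes. For a fixed pair $(a,b)$, such partitions correspond bijectively to solutions $(u,v)\in\mathbb{Z}_{\ge1}^2$ of $au+bv=n$, so there are $S_{a,b}(n)$ of them. Regularity depends only on $\{a,b\}$ and is detected by $\epsilon_\ell(a,b)$, respectively $\epsilon_{\ell,\mu}(a,b)$. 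Summing over pairs gives the stated sums, which are finite since $S_{a,b}(n)=0$ once $a+b>n$.

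Inserting these expressions into the truncated congruence yields both displayed congruences. There is no real obstacle. The only point needing care is the bookkeeping that each two-part partition is counted exactly once, which holds because the ordering $a>b$ fixes a unique representative pair. One could also note that only the parity of $N_2$ matters, since it is multiplied by $4$ modulo $8$, but the statement does not require reducing further.
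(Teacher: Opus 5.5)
Your proposal is correct and is essentially the paper's proof: take $k=3$ in Theorem~\ref{thm:master}, note that the terms with $r\ge 3$ vanish modulo $8$, and substitute the explicit formulas for $N_1$ and $N_2$. Your added remarks, that the sums are finite and that the ordering $a>b$ counts each two-part-size partition once, are accurate details the paper leaves implicit.
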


\begin{proof}
Take $k=3$ in Theorem~\ref{thm:master}; only $N_{1}$ and
$N_{2}$ survive modulo $8$, and both are given explicitly.
\end{proof}

\section{Concluding remarks}\label{sec:conc}

\begin{enumerate}
\item Theorem~\ref{thm:master} may be viewed as a generalization of Kim's structural decomposition of the ordinary overpartition function. In the unrestricted case, Kim evaluated the parity of the two-distinct-part contribution and thereby obtained a complete characterization modulo $8$. An analogous evaluation of
\[
N_2^{(\ell)}(n)
\quad\text{and}\quad
N_2^{(\ell,\mu)}(n)
\]
would immediately yield complete modulo $8$ characterizations for regular and biregular overpartitions.
\item Theorem~\ref{thm:mod8} expresses $R_\ell(n)$ and $R_{\ell,\mu}(n)$ modulo $8$ in terms of the number of regular partitions having exactly one and two distinct part sizes. This is the direct analogue of Kim's decomposition for unrestricted overpartitions. To obtain a complete characterization modulo $8$, it therefore remains to determine the parity of the two-distinct-part term
\(
N_2^{(\ell)}(n)
\)
and its biregular analogue
\(
N_2^{(\ell,\mu)}(n).
\)

In the unrestricted case, Kim showed that the parity of $p_2(n)$ admits an explicit arithmetic description, leading to a complete modulo $8$ characterization of $\overline p(n)$. It is therefore natural to ask for analogous descriptions in the regular and biregular settings.
    \item  It would also be interesting to determine the parity of
\[
N_3^{(\ell)}(n)
\quad\text{and}\quad
N_3^{(\ell,\mu)}(n),
\]
which would lead to explicit congruences modulo $16$.
\item Theorem~\ref{thm:master} provides a unified framework for studying congruences modulo powers of $2$. Rather than working directly with overpartitions, it reduces every congruence modulo $2^k$ to the enumeration of ordinary partitions with fewer than $k$ distinct part sizes. This viewpoint extends Kim's approach from unrestricted overpartitions to general classes of restricted overpartitions.
\end{enumerate}

\end{document}